\author{Renpeng Zheng}
\address{School of Mathematical Sciences \\University of Nottingham \\Nottingham \\NG7 2RD \\United Kingdom}
\email{renpeng.zheng@nottingham.ac.uk}
\newcommand{\zh}[1]{\begin{CJK*}{UTF8}{gkai}#1\end{CJK*}}
\newenvironment{abstractZH}{
  \begin{otherlanguage}{chinese}
    \begin{abstract}
      \begin{CJK*}{UTF8}{gkai}
}{
      \end{CJK*}
    \end{abstract}
  \end{otherlanguage}
}
\newtheorem{theorem}{Theorem}[section]
\newtheorem*{theorem*}{Theorem}
\newtheorem{corollary}[theorem]{Corollary}
\newtheorem{lemma}[theorem]{Lemma}
  \theoremstyle{definition}
  \theoremstyle{remark}
  \newtheorem*{remark*}{Remark}
\crefname{claim}{claim}{claims}
\crefname{conjecture}{conjecture}{conjectures}
\crefname{question}{question}{questions}
\crefname{lemdefn}{lemma/definition}{lemmas-definitions}
\Crefname{lemdefn}{Lemma/Definition}{Lemma-Definitions}
\crefname{thmdefn}{theorem/definition}{theorems/definitions}
\Crefname{thmdefn}{Theorem/Definition}{Theorems/Definitions}
\crefname{equation}{eq.}{eq.}
\Crefname{equation}{eq.}{eq.}
\newmdenv[
    linecolor=black,
    linewidth=0.2pt,
    skipbelow=10pt,
    topline=false, bottomline=false, rightline=false
]{inline}
\newcommand{\wait}[1]{\textrm{\color{blue}#1}}
\newcommand{\warn}[1]{\textrm{\color{red}#1}}
\newcommand{\new}{\textrm{\color{orange} (NEW)} }
\newcommand{\commentOFF}{
  \renewcommand{\wait}[1]{}
  \renewcommand{\warn}[1]{}
  \renewcommand{\new}{}
}
\let\over\@@over
\def\oparen{\mathopen{}\left\lparen}
\def\cparen{\right\rparen\mathclose{}} % ()
\def\obrack{\mathopen{}\left\lbrack}
\def\cbrack{\right\rbrack\mathclose{}} % []
{
  \catcode`\(=13 \gdef({\oparen} \catcode`\)=13 \gdef){\cparen}
  \catcode`\[=13 \gdef[{\obrack} \catcode`\]=13 \gdef]{\cbrack}
}
\everydisplay\expandafter{\the\everydisplay
  \def\{{\mathopen{}\left\lbrace} \def\}{\right\rbrace\mathclose{}} 
  \catcode`\(=13 \catcode`\)=13 \catcode`\[=13 \catcode`\]=13
}
\let\origmaketitle\maketitle
\def\maketitle{
  \begingroup
  \def\uppercasenonmath##1{} % this disables uppercasing title
  \let\MakeUppercase\relax % this disables uppercasing authors
  \origmaketitle
  \endgroup
  \vspace{-0.6cm}
}
\renewcommand{\tocsection}[3]{%
  \indentlabel{\@ifnotempty{#2}{\ignorespaces#1 #2.~}}#3}
\renewcommand{\tocsubsection}[3]{\color{black!60}%
  \indentlabel{\@ifnotempty{#2}{\ignorespaces#1 #2.~}}#3}
\renewcommand{\tocsubsubsection}[3]{\color{black!60} 
  \indentlabel{\@ifnotempty{#2}{\ignorespaces#1 \phantom{1.1.}~}}#3}
\def\@tocline#1#2#3#4#5#6#7{\relax
  \ifnum #1>\c@tocdepth % then omit
  \else
    \def\author##1{\newline\textsc{##1}}%
    \par \addpenalty\@secpenalty\addvspace{#2}%
    \begingroup
      \hyphenpenalty\@M
      \@ifempty{#4}{%
        \@tempdima\csname r@tocindent\number#1\endcsname\relax
      }{%
        \@tempdima#4\relax
      }%
      \parindent\z@ \leftskip#3\relax \advance\leftskip\@tempdima\relax
      \rightskip\@pnumwidth plus.2\hsize \parfillskip-\@pnumwidth
      #5\leavevmode\hskip-\@tempdima #6\nobreak\relax
      ~ \dotfill  % filling the blank
      \hbox to\@pnumwidth{\@tocpagenum{#7}}\par
      \nobreak
    \endgroup
  \fi
}
\def\l@subsection{\@tocline{2}{0pt}{1pc}{5pc}{}}
\def\l@subsubsection{\@tocline{3}{0pt}{1pc}{5pc}{}}
\let\oldbigoplus\bigoplus
\renewcommand{\bigoplus}{\oldbigoplus\limits}
\let\oldbigotimes\bigotimes
\renewcommand{\bigotimes}{\oldbigotimes\limits}
\let\oldbigcap\bigcap
\renewcommand{\bigcap}{\oldbigcap\limits}
\let\oldbigcup\bigcup
\renewcommand{\bigcup}{\oldbigcup\limits}
\let\oldbigsqcup\bigsqcup
\renewcommand{\bigsqcup}{\oldbigsqcup\limits}
\let\oldlim\lim
\renewcommand{\lim}{\oldlim\limits}
\let\oldlimsup\limsup
\renewcommand{\limsup}{\oldlimsup\limits}
\let\oldliminf\liminf
\renewcommand{\liminf}{\oldliminf\limits}
\let\oldsup\sup
\renewcommand{\sup}{\oldsup\limits}
\let\oldinf\inf
\renewcommand{\inf}{\oldinf\limits}
\let\oldsum\sum
\renewcommand{\sum}{\oldsum\limits}
\let\oldprod\prod
\renewcommand{\prod}{\oldprod\limits}
\renewcommand{\epsilon}{\varepsilon}
\renewcommand{\hat}{\widehat}
\newcommand{\dd}{\mathrm{d}}
\newcommand{\nocontentsline}[3]{}
\let\origcontentsline\addcontentsline
\newcommand\stoptoc{\let\addcontentsline\nocontentsline}
\newcommand\resumetoc{\let\addcontentsline\origcontentsline}
\title{K-stability of $\mathbb{Q}$-Fano Spherical Varieties via Compatible Divisors}
\date{1st Jan, 2026}
\begin{document}

\maketitle

\begin{abstract}
  We study the K-stability of $\mathbb{Q}$-Fano spherical varieties using compatible divisors. More precisely, if the $\mathbb{Q}$-Fano variety, with a reductive group action, has an open Borel subgroup orbit, then there is a unique anticanonical $\mathbb{Q}$-divisor computing the equivariant stability threshold. This $\mathbb{Q}$-divisor is invariant under the Borel subgroup action, and it characterizes the K-stability of a $\mathbb{Q}$-Fano spherical variety.
\end{abstract}
\begin{abstractZH}
  我们使用了兼容除子来研究$\mathbb{Q}$-Fano的球面簇形的K稳定性。具体而言，如果一个簇形具有一个既约群作用，同时它有一个开的Borel子群轨道，那么存在一个唯一的反正则$\mathbb{Q}$-除子，可以被用来计算等变稳定性阈值。这个$\mathbb{Q}$-除子在Borel子群作用下不变，而且它实际上决定了$\mathbb{Q}$-Fano的球面簇形的K稳定性。
\end{abstractZH}

\tableofcontents

\section*{Introduction}\label{sec:intro}
\begingroup
  \renewcommand{\thetheorem}{(\Alph{theorem})}
  \stoptoc
  \newcommand{\ord}{\mathrm{ord}}

\newcommand{\GDValo}{\mathrm{DivVal}^{G, \circ}}
\newcommand{\cano}{\mathrm{K}}

\newcommand{\lct}{\mathrm{lct}}

For a $\mathbb{Q}$-Fano variety $X$, K-stability provides a criterion for the existence of Kähler-Einstein metrics, which is used to be known as the Fano Yau-Tian-Donaldson conjecture.
It is proved in \cite{CDS_KE_Fano_I,CDS_KE_Fano_II, CDS_KE_Fano_III} for smooth varieties (Fano manifolds), and in \cite{Berman_Kps_QFano_KE, LXZ_fg_val_compu_thre} for $\mathbb{Q}$-Fano varieties. See \cite{Xu_Book} for a detailed study.

The original definition of K-stability is through (Donaldson's) Futaki invariant on test configurations, but the invention of the so-called stability threshold (in \cite{FO_Kst_Kano,BJ_thre_val_kst}) is crucial in further developments.

Verifying K-stability (or estimating the stability thresholds) for an arbitrary $\mathbb{Q}$-Fano variety was challenging until the Abban-Zhuang method appeared in \cite{AZ_Kstab_fano_flag}. However, several classes of varieties were studied prior to that due to their abundance of symmetries, e.g., toric and spherical varieties.

In this paper, we use the idea of the compatible divisors from \cite{AZ_Kstab_fano_flag} to give an alternative and simpler proof for the formula of equivariant stability threshold for toric and spherical varieties.

In \cite{BJ_thre_val_kst}, the authors show that, for a $\mathbb{Q}$-Fano toric variety $X$ with an algebraic torus $T \subseteq X$, the stability threshold $\delta(X)$ (see \Cref{def:delta}) is equal to the log canonical threshold of a $T$-invariant anticanonical $\mathbb{Q}$-divisor, which we denote by $D^T_X$.
Their proof consists of two parts, first prove $\delta(X) = \delta_T(X)$, where $\delta_T(X)$ is the $T$-equivariant stability threshold (see \Cref{def:deltaG}), and then get a formula of $\delta_T(X)$.
The former part simply follows from Zhuang's equivariant criterion \cite{Zhuang_equi_Kst} (see \Cref{lem:toric:delta}); the latter part is simplified to a short argument using compatible divisors (see \Cref{sec:comp:tor}).

% The $D^T_X$ of a $\mathbb{Q}$-Fano toric variety $X$ is exactly the unique $T$-invariant anticanonical $\mathbb{Q}$-divisor ``asymptotically'' compatible with all $T$-invariant divisorial valuations on $X$.

A direct corollary is a rephrasing of the classical criterion for toric varieties,
\begin{corollary}[c.f., \Cref{cor:tor:Kss}]
	$X$ is K-semistable iff $D^T_X = D_1 + \dots + D_k$, where $D_1, \dots, D_k$ are the $T$-invariant prime divisors on $X$. That is, $D^T_X$ is identical to the standard choice of the anticanonical divisor for a toric variety.
\end{corollary}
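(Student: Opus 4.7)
The plan is to combine the formula $\delta(X) = \mathrm{lct}(X, D^T_X)$ promised in the introduction, with the equivariant equality $\delta(X) = \delta_T(X)$ from \Cref{lem:toric:delta} and the compatible-divisor computation of \Cref{sec:comp:tor}, together with a short linear-algebra observation about the primitive rays of a complete fan.

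Writing the effective $T$-invariant anticanonical $\mathbb{Q}$-divisor as $D^T_X = \sum_i a_i D_i$ with $a_i \geq 0$, I would invoke the standard log-canonical-threshold formula for the toroidal pair $(X, \sum_i D_i)$ (which uses $K_X + \sum_i D_i \sim 0$) to conclude $\mathrm{lct}(X, D^T_X) = 1/\max_i a_i$. The direction $(\Leftarrow)$ is then immediate: if $D^T_X = \sum_i D_i$, then $\max_i a_i = 1$, so $\delta(X) = 1$ and $X$ is K-semistable.

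For $(\Rightarrow)$, the assumption $\delta(X) \geq 1$ forces $a_i \leq 1$ for every $i$. The linear equivalence $\sum_i a_i D_i \sim \sum_i D_i$ supplies a character $m$ of $T$ with $a_i - 1 = \langle m, v_i \rangle$, where $v_i$ is the primitive ray generator of $D_i$; hence $\langle m, v_i \rangle \leq 0$ on every ray of the fan. Completeness of the fan now forces $m = 0$, since the primitive ray generators positively span the ambient real vector space, and any linear functional nonpositive on all of them must vanish identically. Thus $a_i = 1$ for every $i$, giving $D^T_X = \sum_i D_i$. I do not anticipate any serious obstacle; the one step requiring a moment's thought is this last appeal to the positive spanning property of a complete fan, which is a standard fact in toric geometry.
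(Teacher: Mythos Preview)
Your argument is correct and follows the route the paper itself takes: the corollary is stated as an immediate consequence of \Cref{thm:toric} (which gives $\delta(X)=\min_i 1/a_i$), with the paper offering no further detail beyond a citation to \cite{BJ_thre_val_kst}. Your fan-completeness step is fine; the paper's own implicit version of it, visible in the proof of \Cref{lem:toric:delta}, is simply that if all $a_i\le 1$ with at least one strict inequality then $\sum_i(1-a_i)D_i$ would be a nonzero effective $\mathbb{Q}$-divisor $\mathbb{Q}$-linearly equivalent to zero, impossible on a projective variety.
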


We then find the analogy of $D^T_X$ on spherical varieties (see \Cref{sec:prelim:sph}). Spherical varieties are the natural generalization of toric varieties, flag varieties and symmetric varieties.

Given a connected reductive group $G$ and a Borel subgroup $B$ of $G$, a spherical $G$-variety is a $G$-variety that admits an open $B$-orbit. See, for example, \cite{Timashev_Homo_space_equivar_embed} for more properties of spherical varieties. Our main result is to find the $B$-invariant anticanonical $\mathbb{Q}$-divisor $D^B_X$ that computes the $G$-equivariant stability threshold $\delta_G(X)$ (see \Cref{sec:prelim:kstab:GKst}) for a $\mathbb{Q}$-Fano spherical $G$-variety $X$.

\begin{theorem}[c.f., \Cref{thm:sph}]
	Let $X$ be a $\mathbb{Q}$-Fano spherical $G$-variety, the $G$-equivariant stability threshold of $X$ satisfies
	\begin{equation*}
		\delta_G(X) = \inf_{v \in \GDValo_X} {A_X(v) \over v(D^B_X)} = \min_{i = 1, \dots, k} {A_X(v_i) \over v_i(D^B_X)}
	\end{equation*}
	for some $B$-invariant effective anticanonical $\mathbb{Q}$-divisor $D^B_X$ (which is uniquely determined by $X$), where 
	\begin{itemize}
		\item $\GDValo_X$ is the set of non-zero $G$-invariant divisorial valuations on $X$ (see \Cref{sec:prelim:bir:val});
		\item $A_X(v)$ is the log discrepancy of $v$ (see \Cref{sec:prelim:bir:dis}); and
		\item $\{v_i\}_i$ is a prescribed finite subset of the $G$-invariant divisorial valuations on $X$.
	\end{itemize}
\end{theorem}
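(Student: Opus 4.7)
The plan is to extend the toric strategy outlined above (and detailed in \Cref{sec:comp:tor}) to the spherical setting, replacing the maximal torus by the Borel subgroup $B$ and the cocharacter lattice by the Luna-Vust data of $X$. The first stage is to pass from $\delta_G$ to an infimum over $G$-invariant divisorial valuations. By Zhuang's equivariant criterion, $\delta_G(X) = \inf_{v \in \GDValo_X} A_X(v)/S_X(v)$. By Luna-Vust theory, $\GDValo_X$ embeds as the rational points of a polyhedral cone $\mathcal{V} \subset N_\mathbb{Q}$, where $N_\mathbb{Q} = \Hom(M, \mathbb{Q})$ and $M$ is the lattice of $B$-weights of $B$-semi-invariant rational functions on $X$; each such $v$ is determined by its pairings with $B$-semi-invariants.

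The key second stage is the construction of $D^B_X$. Starting from any anticanonical representative (for instance via Brion's formula), I would modify it by adding the divisor of a suitable $B$-semi-invariant rational function to produce a $B$-invariant effective $\mathbb{Q}$-divisor $D^B_X \sim_\mathbb{Q} -K_X$ supported on $B$-invariant prime divisors. The coefficients should be chosen so that the linear function $v \mapsto v(D^B_X)$ on $N_\mathbb{Q}$ agrees with the piecewise linear function $v \mapsto S_X(v)$ at the rays of the colored fan of $X$, and bounds it from below elsewhere; concavity of $S_X$ on each maximal cone of the colored fan should make this compatibility achievable. Uniqueness of $D^B_X$ follows from the fact that $\GDValo_X$ spans $N_\mathbb{Q}$, which rigidifies the coefficients once the pinning condition is imposed.

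With $D^B_X$ in hand, the formula follows by chaining inequalities. From $v(D^B_X) \leq S_X(v)$ we get $A_X(v)/v(D^B_X) \geq A_X(v)/S_X(v)$ pointwise, hence $\inf_v A_X(v)/v(D^B_X) \geq \delta_G(X)$. For the reverse inequality, observe that both $A_X$ and $v \mapsto v(D^B_X)$ are piecewise linear on $\mathcal{V}$ (linear on each cone of the colored fan for the former, globally linear for the latter), so the linear-fractional ratio $A_X(v)/v(D^B_X)$ attains its infimum at the rays of the colored fan. At each such ray $v_i$ the equality $v_i(D^B_X) = S_X(v_i)$ holds, so $A_X(v_i)/v_i(D^B_X) = A_X(v_i)/S_X(v_i) \geq \delta_G(X)$; since $\delta_G(X)$ is itself attained at one of these rays (by the same linear-fractional argument applied to $A_X/S_X$), we conclude both equalities of the theorem.

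The principal obstacle is the second stage: producing $D^B_X$ of the claimed form and verifying the compatibility $v_i(D^B_X) = S_X(v_i)$ at every ray of the colored fan. In the toric case this reduces to a short computation with the moment polytope, but in the spherical case one must match the integral description of $S_X$ (in terms of the moment polytope and Okounkov body of a spherical variety) against the values of $v$ on a prescribed set of $B$-semi-invariants, and carefully redistribute coefficients between colors and $G$-invariant prime divisors using divisors of $B$-semi-invariant functions on $X$.
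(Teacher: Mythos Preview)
Your proposal has a genuine gap at exactly the point you flag as the ``principal obstacle'': you never construct $D^B_X$, and the route you sketch for doing so is both unjustified and unnecessarily indirect. You propose to choose coefficients so that $v \mapsto v(D^B_X)$ agrees with $v \mapsto S_X(v)$ at the rays of the colored fan and lies below it elsewhere, invoking an unproved ``concavity of $S_X$.'' But the paper shows something much stronger and much simpler: $S(L;v) = v(D^B_L)$ holds \emph{exactly for every} $G$-invariant divisorial valuation $v$, so that $S$ is in fact globally linear on $\mathcal{V}$, not merely piecewise linear and concave. The divisor is constructed explicitly as the barycenter $D^B_L = \mathrm{bar}_{\mathrm{d}\mu}(|L|_{\mathbb{R}}^B)$ of the $B$-invariant $\mathbb{Q}$-linear series with respect to the Duistermaat--Heckman measure; no ad hoc matching at rays is needed.

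The mechanism you are missing is the compatible-divisor argument. The isotypic decomposition $H^0(X,mL) = \bigoplus_\lambda V_\lambda$ is compatible with every $G$-invariant valuation $v$, because such a $v$ is constant on each $V_\lambda \setminus \{0\}$ (equal to $v(e^B_\lambda)$, the value on the unique $B$-semi-invariant section). Hence any basis respecting the decomposition is compatible with $v$, which gives $S_m(L;v) = v(D^B_m)$ where $D^B_m = \frac{1}{N_m}\sum_\lambda N_{m,\lambda}\,\mathrm{div}_{\mathbb{Q}}(e^B_\lambda)$ is the $\mathrm{d}\mu_m$-barycenter; passing to the limit gives the exact identity. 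Once $S(v) = v(D^B_L)$ for all $v \in \mathcal{V}$, the first equality is immediate from the definition of $\delta_G$ (note: this is the definition, not Zhuang's theorem, which you misattribute in your first sentence), and the second equality follows from piecewise $\mathbb{R}_{\ge 0}$-linearity of $A_X$ on $\mathcal{V}$ against the globally linear denominator.
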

\begin{remark*}
	The second equality follows from the piecewise $\mathbb{R}_{\geq 0}$-linearity of $A_X$, see \cite{Golota_Delta_Fano_large_aut} (based on \cite[Proposition 5.2]{Pas_sing_sphe}) for the details.
\end{remark*}

As mentioned earlier, by \cite{Zhuang_equi_Kst}, a $\mathbb{Q}$-Fano variety with a group action is K-semistable iff it is equivariantly K-semistable. Therefore, $D^B_X$ is the key to study the K-stability of spherical varieties.

There have been already several studies on the K-stability of $\mathbb{Q}$-Fano spherical varieties, via geometric analysis (\cite{Delcroix_Kst_Fano_spher}), test configurations (\cite{LLW_wei_Kst_Fano_sph}), and valuative invariants (\cite{Yin_valu_lin_sph}). Our contribution is simplifying \cite[Theorem 4.1]{Yin_valu_lin_sph}, and our formula has the advantage of being free from an \textit{a priori} choice of a $G$-linearization (see \Cref{def:linz}).

\subsection*{Convention}
Throughout this paper, we work over the field of complex numbers $\mathbb{C}$. A \emph{variety} means a reduced, separated scheme of finite type over $\mathbb{C}$, and we further require them to be irreducible and normal. In particular, all algebraic groups are assumed to be connected.
A \emph{point} of a variety is a closed point (i.e., $\mathbb{C}$-point).

  \resumetoc
\endgroup

\section{Preliminaries}\label{sec:prelim}

% \begin{question}
% 	It is straightforward to see $D^B_{r L} = r D^B_L$, when $r \in \mathbb{Q}_{> 0}$. What is the relationship between $D^B_{L_1 + L_2}$ and $D^B_{L_1}, D^B_{L_2}$, when $L_1, L_2$ are two ample $\mathbb{Q}$-line bundles?
% \end{question}

We are going to show the $\mathbb{Q}$-divisor $D^B_L := \bc_{\dd \mu}(\Rls{L}^B)$ is the spherical analogy of $D^T_L$ in \Cref{sec:comp:tor}. Similar to \Cref{thm:toric}, our main result is
\begin{theorem}\label[theorem]{thm:sph}
	Let $X$ be a spherical $G$-variety, and let $L$ be a $\mathbb{Q}$-line bundle on $X$, the $G$-equivariant stability threshold of $L$ satisfies
	\begin{equation*}
		\delta_G(L) = \inf_{v \in \DVal^G_X} {A_X(v) \over v(D^B_L)}.
	\end{equation*}
\end{theorem}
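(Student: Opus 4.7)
My plan is to establish the identity $S(L; v) = v(D^B_L)$ for every $v \in \GDValo_X$ and then invoke \Cref{def:deltaG} directly. The strategy parallels the toric case of \Cref{sec:comp:tor}: there, the $T$-isotypic decomposition of $H^0(X, m L)$ into one-dimensional pieces was automatically compatible with every $T$-invariant valuation; here, I use the $G$-isotypic decomposition and exploit sphericity in the analogous role.

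\emph{Step 1: compatibility of the $G$-isotypic decomposition.} Fix a $G$-linearization $\alpha$ of $L$ and some $m \in \mathbb{N}(L, \alpha)$, and write
\begin{equation*}
    V := H^0(X, m L) = \bigoplus_{\lambda \in P_m(X, L, \alpha)} V_\lambda.
\end{equation*}
The multiplicity-freeness property of spherical varieties (equivalently, $\dim H^0(X, m L)_\lambda^{(B)} \leq 1$) guarantees that each nonzero $V_\lambda$ is a single simple $G$-module. The key claim is that this decomposition is compatible with any $v \in \Val^G_X$ in the sense of \Cref{def:comp:decomp}. Since $v$ is $G$-invariant, every sublevel set $F^t V := \{s \in V : v(s) \geq t\}$ is a $G$-submodule; by multiplicity-freeness it therefore equals $\bigoplus_{\lambda \in S(t)} V_\lambda$ for some $S(t) \subseteq P_m(X, L, \alpha)$. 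This forces $v$ to be constant on each $V_\lambda \setminus \{0\}$, with some value $v(V_\lambda)$. For $f = \sum_\lambda f_\lambda$ with $t_0 := \min\{v(V_\lambda) : f_\lambda \neq 0\}$, I choose $\epsilon > 0$ smaller than any positive gap in the finite set $\{v(V_\lambda)\}_\lambda$; then the graded piece $F^{t_0} V / F^{t_0 + \epsilon} V \cong \bigoplus_{v(V_\lambda) = t_0} V_\lambda$ receives the nonzero image $\sum_{v(V_\lambda) = t_0} f_\lambda$ of $f$, forcing $v(f) = t_0 = \min_\lambda v(f_\lambda)$.

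\emph{Step 2: reduction to the moment polytope.} Applying \Cref{lem:SmdefDecom} to this decomposition together with constancy of $v$ on each $V_\lambda \setminus \{0\}$,
\begin{equation*}
    S_m(L; v) = \frac{1}{N_m} \sum_\lambda N_{m, \lambda} \cdot \frac{1}{m} v(s_\lambda),
\end{equation*}
where I take $s_\lambda \in V_\lambda^{(B)} \setminus \{0\}$ (the highest-weight vector), so that $D_\lambda := \divQ(s_\lambda) \in \frac{1}{m} \ls{m L}^B$ satisfies $p_\alpha(D_\lambda) = \lambda / m$. Since valuations are $\mathbb{Q}_{\geq 0}$-linear on effective $\mathbb{Q}$-Cartier $\mathbb{Q}$-divisors and $p_\alpha$ is a convex-linear isomorphism onto $P(X, L, \alpha)$, the map $v \circ q_\alpha$ (with $q_\alpha := p_\alpha^{-1}$) extends uniquely to an $\mathbb{R}$-affine function on the compact polytope $\cl{P_\alpha}$. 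Rewriting the finite sum as an integral against the atomic measure $\dd \mu^\alpha_m$ from \Cref{sec:Qls:sph} gives
\begin{equation*}
    S_m(L; v) = \frac{\int_{\cl{P_\alpha}} (v \circ q_\alpha) \, \dd \mu^\alpha_m}{\int_{\cl{P_\alpha}} \dd \mu^\alpha_m}.
\end{equation*}

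\emph{Step 3: passage to the limit.} Sending $m \to \infty$ and using the weak convergence $\dd \mu^\alpha_m \to \dd \mu^\alpha$ together with the continuity of the affine function $v \circ q_\alpha$ on the compact $\cl{P_\alpha}$, the ratio converges to the value of that affine function at the barycenter:
\begin{equation*}
    S(L; v) = (v \circ q_\alpha)\bigl(\bc_{\dd \mu^\alpha}(\cl{P_\alpha})\bigr) = v\bigl(\bc_{\dd \mu}(\Rls{L}^B)\bigr) = v(D^B_L),
\end{equation*}
independent of $\alpha$ by \Cref{prop:measONrls}. Substituting into \Cref{def:deltaG} finishes the proof. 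The main technical step is the compatibility claim in Step~1, since it is the unique place where $G$-invariance of $v$ and multiplicity-freeness of $V$ as a $G$-module must be combined through the isotypic filtration argument; once that is in place, the remaining steps transport the toric argument of \Cref{sec:comp:tor} directly across the convex-linear isomorphism $p_\alpha$.
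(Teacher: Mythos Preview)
Your proof is correct and follows the same overall architecture as the paper's: establish that the isotypic decomposition \Cref{eq:decompSph} is compatible with every $G$-invariant divisorial valuation, deduce $S_m(L;v)=v(D^B_m)$ with $D^B_m=\bc_{\dd\mu_m}(\Rls{L}^B)$, and pass to the limit to get $S(L;v)=v(D^B_L)$. Steps~2 and~3 are a repackaging of the paper's argument in measure-theoretic language.

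The genuine difference is in Step~1. The paper proves constancy of $v$ on each $V_\lambda$ (\Cref{lem:GvalCONST}) and compatibility (\Cref{lem:GvalCPTdecomp}) by an indirect route: it first shows, via \Cref{lem:moneSPAN} and the injectivity of $\rho$ in \Cref{def:valcone}, that a $G$-invariant valuation is determined by its values on the highest-weight vectors $e^B_\lambda$ (\Cref{lem:GvalBYeb}), and then checks that the candidate $\hat v(s):=\min_{s_\lambda\neq 0} v(e^B_\lambda)$ agrees with $v$ on those vectors. This uses that $mL$ is very ample. Your argument is more elementary and self-contained: the sublevel sets $F^tV$ are $G$-submodules by $G$-invariance of $v$, and multiplicity-freeness forces every $G$-submodule of $V$ to be a direct sum of isotypic pieces, which immediately yields both constancy on each $V_\lambda$ and compatibility. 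This bypasses \Cref{lem:moneSPAN}, \Cref{lem:GvalBYeb}, and the very-ampleness hypothesis entirely. The paper's route, on the other hand, makes explicit the link to the combinatorial description of $\Val^G_X$ via $\rho$, which is conceptually in keeping with the toric analogy it is generalising.
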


Given a polarized spherical $G$-variety $(X, L)$ and a $G$-linearization $\alpha$, for every $m \in \mathbb{N}(L, \alpha)$ such that $m L$ is very ample, we can decompose the finite-dimensional $G$-module $H^0(X, m L)$ into
\begin{equation}\label{eq:decompSph}
	H^0(X, m L) = \bigoplus_{\lambda \in P_m(X, L, \alpha)} H^0(X, m L)_\lambda .
\end{equation}
For simplicity, let $V := H^0(X, m L)$ (thus $V_\lambda = H^0(X, m L)_\lambda$), and $N_{m, \lambda} := \dim(V_\lambda)$.

For every $\lambda \in P_m(X, L, \alpha)$, we have a unique (up to scaling) non-zero section $e^B_\lambda \in V^{(B)}_\lambda$.

\begin{lemma}\label[lemma]{lem:GvalBYeb}
	Given two valuations $v, v' \in \DVal^G_X$, if for every $\lambda \in P_m(X, L, \alpha)$, $v(e^B_\lambda) = v'(e^B_\lambda)$, then $v = v'$.
\end{lemma}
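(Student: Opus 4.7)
My plan is to reduce the statement to the injectivity part of \Cref{def:valcone}, which says a $G$-invariant valuation is determined by its values on nonzero elements of $\mathbb{C}(X)_\mu^{(B)}$ as $\mu$ ranges over $M$. So it suffices to show that the hypothesis $v(e^B_\lambda) = v'(e^B_\lambda)$ for all $\lambda \in P_m(X, L, \alpha)$ forces $\rho(v) = \rho(v')$ in $N_\mathbb{Q}$.

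First I would fix a base weight $\lambda_0 \in P_m(X, L, \alpha)$ and, for each $\lambda \in P_m(X, L, \alpha)$, form the ratio $f_{\lambda - \lambda_0} := e^B_\lambda / e^B_{\lambda_0} \in \mathbb{C}(X)^\times$. A direct computation (the one already performed in the proof of \Cref{lem:moneSPAN}) shows $b \cdot f_{\lambda - \lambda_0} = (\lambda - \lambda_0)(b)\, f_{\lambda - \lambda_0}$, so $f_{\lambda - \lambda_0} \in \mathbb{C}(X)^{(B)}_{\lambda - \lambda_0}$, and in particular $\lambda - \lambda_0 \in M$. Using that $v$ and $v'$ are valuations, the hypothesis gives
\begin{equation*}
	\rho(v)(\lambda - \lambda_0) = v(f_{\lambda - \lambda_0}) = v(e^B_\lambda) - v(e^B_{\lambda_0}) = v'(e^B_\lambda) - v'(e^B_{\lambda_0}) = \rho(v')(\lambda - \lambda_0).
\end{equation*}

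Next I would invoke \Cref{lem:moneSPAN}: since $mL$ need not be very ample for an arbitrary choice in this lemma, I would first point out that any $\mu \in M$ is a $\mathbb{Z}$-linear combination of elements $\lambda - \lambda_0$ with $\lambda \in P_m(X, L, \alpha)$ once $m$ is chosen so that $mL$ is very ample; if $mL$ is not already very ample I would pass to a sufficiently divisible multiple $m' \in \mathbb{N}(L, \alpha)$ of $m$ and note that $P_m \subseteq P_{m'}$ after the obvious rescaling, so the hypothesis propagates (this uses the standard fact that $e^B_\lambda$ for varying $m$ are related by taking powers, and valuations behave additively under products). By $\mathbb{Z}$-linearity of $\rho(v)$ and $\rho(v')$ on $M$, the displayed equality extends from differences $\lambda - \lambda_0$ to all of $M$, giving $\rho(v) = \rho(v')$.

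Finally, I would conclude using the injectivity of $\rho$ on $G$-invariant valuations (\Cref{def:valcone}) that $v = v'$. The main (minor) obstacle is the point about very ampleness: I must ensure I can apply \Cref{lem:moneSPAN} in a form that matches whichever $m$ is given, which just requires replacing $m$ by a suitably divisible multiple and checking the hypothesis transfers, but this is routine since $e^B_{k\lambda}$ for the module $H^0(X, kmL)$ can be taken to be $(e^B_\lambda)^k$ up to a scalar.
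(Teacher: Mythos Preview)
Your approach is essentially the paper's: form ratios $e^B_\lambda / e^B_{\lambda_0}$ to get $B$-semi-invariant rational functions, use \Cref{lem:moneSPAN} to see their weights span $M_\mathbb{Q}$, and conclude $\rho(v)=\rho(v')$ via the injectivity in \Cref{def:valcone}. The paper's proof is a one-liner citing exactly these two ingredients.

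One correction, however. You overlooked that the paragraph introducing the decomposition~\eqref{eq:decompSph} already fixes $m$ with $mL$ very ample, and the paper's proof explicitly reminds the reader of this (``Note that we have assumed $mL$ to be very ample''). So your detour to a multiple $m'$ is unnecessary --- and, as written, it does not actually work. Passing from $m$ to $m'=km$ only propagates the hypothesis to the subset $k\cdot P_m \subseteq P_{m'}$ (via $(e^B_\lambda)^k$), so you only learn that $\rho(v)$ and $\rho(v')$ agree on $k(P_m-\lambda_0)$. But \Cref{lem:moneSPAN} applied at level $m'$ tells you that $P_{m'}-k\lambda_0$ generates $M$, not that $k(P_m-\lambda_0)$ does; if $P_m-\lambda_0$ fails to span $M_\mathbb{Q}$ (e.g., when $H^0(X,mL)$ is a single irreducible $G$-module, so $P_m$ is a point), no amount of rescaling rescues the argument. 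Simply drop that branch and invoke the standing very-ampleness assumption.
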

\begin{proof}
	It is a simple corollary of \Cref{lem:moneSPAN} and \Cref{def:valcone}. Note that we have assumed $m L$ to be very ample.
\end{proof}

\begin{lemma}\label[lemma]{lem:GvalCONST}
	Every valuation $v \in \DVal^G_X$ is constant on $V_\lambda \setminus \{0\}$ for every $\lambda \in P_m(X, L, \alpha)$; more precisely, $v(s) = v(e^B_\lambda)$ for every $s \in V_\lambda \setminus \{0\}$.
\end{lemma}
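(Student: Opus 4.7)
The plan is to reduce the claim to a short two-sided orbit-spanning argument, the structural input being that each non-zero isotypic component $V_\lambda$ is in fact a \emph{simple} $G$-module, not merely isotypic. This simplicity is where the sphericity of $X$ enters: for a spherical $G$-variety, $H^0(X, mL)$ is multiplicity-free as a $G$-module, and in our setting the uniqueness (up to scalar) of $e^B_\lambda \in V_\lambda^{(B)}$ is exactly the statement that $\dim V^{(B)}_\lambda = 1$, which for an isotypic module forces $V_\lambda$ to be simple with highest-weight vector $e^B_\lambda$.

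Granted this, fix an arbitrary $s \in V_\lambda \setminus \{0\}$. The $\mathbb{C}$-span of the $G$-orbit $G \cdot s$ is a non-zero $G$-stable subspace of the simple module $V_\lambda$, hence equals $V_\lambda$ itself; in particular, I can write
\[ e^B_\lambda = \sum_{i = 1}^N c_i (g_i \cdot s), \quad g_i \in G,\; c_i \in \mathbb{C}^\times. \]
Interpreting $v$ on a non-zero section via $v(s) := v(\div(s))$ (consistent with the usage of $v$ on sections throughout \Cref{sec:comp}), the $G$-invariance of $v$ gives $v(g_i \cdot s) = v(s)$ for every $i$, while each scalar factor $c_i \in \mathbb{C}^\times$ satisfies $v(c_i) = 0$. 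The ultrametric inequality then yields
\[ v(e^B_\lambda) \geq \min_i v(c_i (g_i \cdot s)) = v(s). \]
Running the same argument in the opposite direction — writing $s$ as a finite $\mathbb{C}$-linear combination of $G$-translates of $e^B_\lambda$, again possible by simplicity of $V_\lambda$ — yields the reverse inequality $v(s) \geq v(e^B_\lambda)$, and combining the two gives the claimed equality $v(s) = v(e^B_\lambda)$.

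The only step with any real content is the first one: confirming simplicity of $V_\lambda$ rather than merely isotypicity. In the spherical setting this is classical — multiplicity-freeness of $H^0(X, mL)$ is one of the standard characterizations of sphericity (see e.g.\ \cite{Timashev_Homo_space_equivar_embed}) — and it is in any case already tacitly used in the lemma's set-up via the uniqueness of $e^B_\lambda$. Once that is in hand, the remainder is a formal two-line application of $G$-invariance and the ultrametric inequality, with no further geometric input required.
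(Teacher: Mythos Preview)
Your argument is correct. The key step $v(g \cdot s) = v(s)$ follows because the center $C_X(v)$ is $G$-invariant, so the local equation of $\div(g\cdot s)=g\cdot\div(s)$ near $C_X(v)$ differs from that of $\div(s)$ by a unit times the $G$-translate of the function, and $G$-invariance of $v$ on $\mathbb{C}(X)$ kills the difference. After that, the two-sided orbit-spanning plus the ultrametric inequality is a clean and standard way to pin down $v$ on a simple module.

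Your route is genuinely different from the paper's. The paper does not use simplicity of $V_\lambda$ or orbit-spanning at all: it writes down an auxiliary $G$-invariant valuation $\hat v$ by the formula $\hat v(\sum_\lambda s_\lambda)=\min_{s_\lambda\neq 0} v(e^B_\lambda)$ and then invokes \Cref{lem:GvalBYeb} (hence \Cref{lem:moneSPAN} and the very ampleness of $mL$) to identify $\hat v$ with $v$. Your argument is more elementary and self-contained --- it needs neither \Cref{lem:GvalBYeb} nor the very-ampleness hypothesis, only multiplicity-freeness --- and it isolates exactly where sphericity enters. The trade-off is that the paper's approach, by establishing $v=\hat v$, simultaneously proves the compatibility of the isotypic decomposition (\Cref{lem:GvalCPTdecomp}) as an immediate by-product, whereas your argument yields only constancy on each $V_\lambda$; the further statement $v(\sum_\lambda s_\lambda)=\min_\lambda v(s_\lambda)$ would require a separate step.
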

\begin{proof}
	For every $v \in \DVal^G_X$, we can define a $G$-invariant valuation
	\begin{equation*}
		\hat{v}(s) := \min\limits_{s_\lambda \neq 0} v(e^B_\lambda)
	\end{equation*}
	where $s = \sum_{\lambda \in P_m(X, L, \alpha)} s_\lambda \in \bigoplus_{\lambda \in P_m(X, L, \alpha)} V_\lambda$. By \Cref{lem:GvalBYeb}, $v = \hat{v}$.

	Therefore, $v(e^B_\lambda) = \hat{v}(e^B_\lambda) = \hat{v}(s) = v(s)$ for every nonzero $s \in V_\lambda \setminus \{0\}$.
\end{proof}

\begin{lemma}\label[lemma]{lem:GvalCPTdecomp}
	The decomposition \Cref{eq:decompSph} is compatible (see \Cref{sec:comp}) with any valuation $v \in \DVal^G_X$.
\end{lemma}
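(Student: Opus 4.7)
By \Cref{def:comp:decomp}, compatibility of the decomposition $V := H^0(X, mL) = \bigoplus_\lambda V_\lambda$ with $v \in \DVal^G_X$ asserts that for every $f = \sum_\lambda f_\lambda$ with $f_\lambda \in V_\lambda$ one has $v(f) = \min_{\lambda : f_\lambda \neq 0} v(f_\lambda)$. The inequality $v(f) \geq \min_{\lambda : f_\lambda \neq 0} v(f_\lambda)$ is immediate from the non-Archimedean triangle inequality of valuations, so the substance of the claim is the reverse inequality.

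My plan is to use the valuative filtration. For each $t \in \mathbb{Q}$, set
\begin{equation*}
  F^t V := \{s \in V \mid v(s) \geq t\},
\end{equation*}
which is plainly a $\mathbb{C}$-linear subspace of $V$. The crucial observation is that the $G$-invariance of $v$ upgrades $F^t V$ to a $G$-submodule: if $s \in F^t V$ and $g \in G$, then $v(g \cdot s) = v(s) \geq t$, so $g \cdot s \in F^t V$. Since $G$ is reductive, the category of finite-dimensional algebraic $G$-modules is semisimple, so every $G$-submodule of $V$ respects the isotypic decomposition $V = \bigoplus_\lambda V_\lambda$; in particular,
\begin{equation*}
  F^t V = \bigoplus_\lambda (F^t V \cap V_\lambda).
\end{equation*}

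Applying this with $t := v(f)$ forces $f_\lambda \in F^{v(f)} V$ for every $\lambda$ with $f_\lambda \neq 0$, hence $v(f_\lambda) \geq v(f)$, which gives the reverse inequality and completes the proof. I do not foresee a serious obstacle: the argument only draws on the $G$-invariance of $v$ together with semisimplicity of representations of the reductive group $G$, and it does not actually invoke the sphericity of $X$. Incidentally, sphericity combined with \Cref{lem:GvalCONST} would let one refine the picture to the statement that each non-zero $V_\lambda$ is a simple $G$-module lying either entirely in $F^t V$ or meeting it trivially, making the decomposition of $F^t V$ especially transparent; but the above argument works without this simplification.
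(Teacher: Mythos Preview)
Your argument is correct, and it takes a genuinely different route from the paper's. The paper deduces \Cref{lem:GvalCPTdecomp} directly from the proof of \Cref{lem:GvalCONST}: there one defines an auxiliary $G$-invariant valuation $\hat{v}(s) := \min_{s_\lambda \neq 0} v(e^B_\lambda)$ and invokes \Cref{lem:GvalBYeb} (hence \Cref{lem:moneSPAN} and the injectivity of $\rho$ on $G$-invariant valuations from \Cref{def:valcone}) to conclude $v = \hat{v}$, which immediately gives the compatibility identity. That argument is specific to the spherical setting, since it rests on the fact that a $G$-invariant valuation is determined by its values on $B$-semi-invariant sections. Your approach, by contrast, uses only that the valuative filtration $F^t V$ is a $G$-submodule together with semisimplicity of finite-dimensional $G$-modules, so it applies to any polarized $G$-variety with $G$ reductive and never touches the valuation cone. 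What the paper's route buys is that \Cref{lem:GvalCONST} and \Cref{lem:GvalCPTdecomp} fall out of a single stroke; what your route buys is a cleaner, more self-contained proof of compatibility that does not depend on the somewhat delicate identification $v = \hat{v}$.
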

\begin{proof}
	According to the proof of \Cref{lem:GvalCONST}, every $v \in \DVal^G_X$ satisfies
	\begin{equation*}
		v(s) = \min_{s_\lambda \neq 0} v(s_\lambda)
	\end{equation*}
	where $s = \sum s_\lambda$.
\end{proof}

We are now well-prepared to prove the main theorem.

\begin{proof}[Proof of \Cref{thm:sph}]
	For every $\lambda \in P_m(X, L, \alpha)$, we pick a basis $\{s_{\lambda, j}\}_j \subseteq V_\lambda$. For every valuation $v \in \DVal^G_X$, by \Cref{lem:GvalCONST}, $v$ is constant on $V_\lambda$, so the basis $\{s_{\lambda, j}\}_j$ is compatible with $v$ on $V_\lambda$. Now $\{s_{\lambda, j}\}_{\lambda, j}$ is a basis of $V_m$ which is compatible with every $v \in \DVal^G_X$, and we denote the corresponding $m$-basis type $\mathbb{Q}$-divisor
	\begin{equation*}
		D_m = {1 \over N_m} \sum_{\lambda \in P_m(X, L, \alpha)} \sum_{j = 1}^{N_{m, \lambda}} \divQ(s_{\lambda, j}).
	\end{equation*}
	Consequently, by \Cref{lem:Smdef}, we have,
	\begin{equation*}
		\delta_{m, G}(L) = \inf_{v \in \GDValo_X} {A_X(v) \over v(D_m)}.
	\end{equation*}

	It is challenging to find the limit of $(D_m)_m$, as $\Qls{L}$ is infinite-dimensional. That is why we introduce the $B$-invariant divisor  $D^B_m := {1 \over N_m} \sum_\lambda N_{m, \lambda} \divQ(e^B_\lambda) = \bc_{\mu_m}(\Rls{L}^B)$. We have,
	\begin{equation*}
		v(D_m) = {1 \over m N_m} \sum_\lambda \sum_{j = 1}^{N_{m, \lambda}} v(s_{\lambda, j}) = {1 \over m N_m} \sum_\lambda N_{m, \lambda} \: v(e^B_\lambda) = v(D^B_m).
	\end{equation*}
	The sequence $(D_m)_m$ is convergent to $D^B_L = \bc_{\mu}(\Rls{L}^B)$ in the finite dimensional topological space $\Rls{L}^B$. As $v$ is linear (thus continuous) on $\Rls{L}^B$, we finally get,
	\begin{equation*}
		\delta_G(L) = \inf_{v \in \GDValo_X} {A_X(v) \over v(D^B_L)}.
	\end{equation*}
\end{proof}

% \begin{question}
% 	Is it possible to study the K-stability (or $\hat{\mathrm{K}}$-stability) of polarized spherical varieties by $D^B_L$?
% \end{question}

% \begin{corollary}
% 	Given a projective spherical $G$-variety, for every $\mathbb{Q}$-line bundle $L$, $D^B_L \in \Qls{L}^B$; as a result, $\delta_G(L)$ is rational.
% \end{corollary}
% \begin{proof}
% 	By \cite[\S4.1]{Brion_grp_Pic_num_char_sph} and \Cref{prop:measONrls}, $\dd \mu^\alpha = f(\lambda) \dd \lambda$ for some $f: \cl{P_\alpha} \to \mathbb{R}$, which is a multiplication of $\mathbb{Q}$-coefficient linear functions. Then, by \Cref{def:mome_poly}, $\cl{P_\alpha}$ is a polytope, we can see $\bc_{\dd \mu^\alpha}(\cl{P_\alpha})$ is a $\mathbb{Q}$-point of $\cl{P_\alpha}$.
% \end{proof}

\section{Compatible Divisors in K-stability}\label{sec:comp}

\section{Rational Linear Series for Spherical Varieties}\label{sec:Qls}

\section{K-(semi)stability of ℚ-Fano Spherical Varieties}\label{sec:KstSph}

%-------------------------------------------
\printbibliography
%-------------------------------------------
\end{document}